\newcommand{\N}{{\mathbb N}}
\newcommand{\R}{{\mathbb R}}
\newcommand{\D}{{\mathbb D}}
\newcommand{\sphere}{{\mathbb S}}
\newcommand{\scM}{{\mathscr M}}
\newcommand{\inv}{^{-1}}
\newcommand{\us}{\underset}
\newcommand{\ol}{\overline}
\newcommand{\supp}{\text{supp}}
\newcommand{\ep}{\epsilon}
\newcommand{\tr}{\text{tr}}
\newcommand{\RNum}[1]{\uppercase\expandafter{\romannumeral #1\relax}}
\newcommand{\be}{\begin{equation}}
\newcommand{\ee}{\end{equation}}
\newcommand{\ba}{\begin{align*}}
\newcommand{\ea}{\end{align*}}
\newtheorem{lem}{Lemma}[section]
\newtheorem{theorem}{Theorem}[section]
\newtheorem{remark}{Remark}[section]
\newtheorem{lemma}{Lemma}[section]
\theoremstyle{definition}
\def\mod#1{{\ifmmode\text{\rm\ (mod~$#1$)}
\else\discretionary{}{}{\hbox{ }}\rm(mod~$#1$)\fi}}
\begin{document}

\bibliographystyle{amsplain}

\author{Albert Chau$^1$}
\address{Department of Mathematics,
The University of British Columbia, Room 121, 1984 Mathematics
Road, Vancouver, B.C., Canada V6T 1Z2} \email{chau@math.ubc.ca}

\thanks{$^1$Research
partially supported by NSERC grant no. \#327637-06}

\author[A. Martens]{Adam Martens}
\address{Department of Mathematics, The University of British Columbia, 1984 Mathematics Road, Vancouver, B.C.,  Canada V6T 1Z2.  Email: martens@math.ubc.ca. }

\title{Exterior Schwarzschild initial data for degenerate apparent horizons}

\maketitle

\vspace{-20pt}

\begin{abstract}

In this note we show that if $g$ is a smooth Riemannian metric on $\mathbb{S}^2$ such that the first eigenvalue of the operator $L_g:=-\Delta_g +K_g$ satisfies $\lambda_1(L_g)=0$ then $(\mathbb{S}^2, g)$ arises as an apparent horizon in an asymptotically flat initial data set with ADM mass arbitrarily close to the associated Hawking mass $\sqrt{\text{area}(\mathbb{S}^2, g)/16\pi}$.  In particular, this determines the Bartnik quasilocal mass (introduced by Bartnik \cite{Bartnik} in 1989) associated with $(\mathbb{S}^2, g)$ in this setting.  We prove these by modifying the construction of Mantoulidis-Schoen \cite{MS} who proved the same results in the case $\lambda_1(L_g)>0$.   It follows that $\lambda_1(g)\geq 0$ is necessary and sufficient for $(\mathbb{S}^2, g)$ to arise from an apparent horizon in an asyptotically flat space-time under the dominant energy condition and in the time symmetric setting, and that the Bartnik mass of the horizon is $\sqrt{\text{area}(\mathbb{S}^2, g)/16\pi}$.



\end{abstract}

\section{Introduction}

 An apparent horizon in a 4-dimensional asymptotically flat space-time $N^4$ is an outermost closed 2-dimensional surface $\Sigma$ in an initial data set $M^3$ with the property that the future directed outgoing null normal field $l_+$ on $\Sigma$ has zero divergence, where the divergence is defined as the scalar trace of the second fundamental form of $\Sigma \subset N^4$ relative to $l_+$.  This property in particular implies that light rays from the surface do not move outward.  Under the dominant energy condition and in the time symmetric setting, both of which will be assumed throughout this paper, an apparent horizon can be characterized as an embedded stable minimal 2-sphere  $\mathbb{S}^2$ in an asymptotically flat 3-dimensional Riemannian manifold $(M^3, G)$ with non-negative scalar curvature $R_G\geq 0$ \cite{Gibbons, Haw, Haw2, HI}.  In particular, the stability condition implies
\begin{equation}\label{stab1}
\int_{\mathbb{S}^2} (-\Delta_g u +K_g u)u d\mu_g \geq \frac{1}{2}\int_{\mathbb{S}^2} (R_G+\|\rho\|^2)u^2 d\mu_g
\end{equation}
for all smooth functions $u$ on $\mathbb{S}^2$ where $K_g$ is the Gauss curvature of the induced metric $g$ on $\mathbb{S}^2$, and $\rho$ is the second fundamental form on $\mathbb{S}^2 \subset (M, G)$.   Since $R_G\geq 0$ it follows that the operator $ L_g:=-\Delta_g +K_g$ is non-negative (has non-negative first eigenvalue $\lambda_1(g):=\lambda_1(L_g)$) on any such apparent horizon $(\mathbb{S}^2, g)$. We summarize this as

$$AH \subset \overline{\scM_+}$$
where 
$$AH:=\{ \begin{text}smooth \,\,metrics \,\, g \,\, on \,\, \mathbb{S}^2 \,\, arising \,\, from \,\, an \,\, apparent\,\,horizon\end{text} \},$$
$$\scM_+:=\{ \begin{text}smooth \,\,metrics \,\, g \,\, on \\, \mathbb{S}^2 \,\,with\,\, \lambda_1(g)>0 \end{text}\}, \text{ and}$$
$$\overline{\scM_+}:=\{ \begin{text}smooth \,\,metrics \,\, g \,\, on \\, \mathbb{S}^2 \,\,with\,\, \lambda_1(g) \geq 0\end{text}\}.$$

 While the space of apparent horizons $AH$ is defined extrinsicly in terms of an ambient manifold $M$, the space $\overline{\scM_+}$ is defined purely instrinsicly on $\mathbb{S}^2$.   In this sense, the opposite inclusion would provide a complete instrinsic characterization of $AH$, though this inclusion is far from obvious.  Mantoulidis-Schoen \cite{MS} proved the inclusion  $\scM_+ \subset AH $.  More precisely, they proved that if $g\in \scM_+$ then $(\mathbb{S}^2, g)$ admits an admissible extension with non-negative scalar curvature in which the boundary $(\mathbb{S}^2,g)$ is outermost minimal and stable as in condition \eqref{stab1}, and thuss $(\mathbb{S}^2, g)$ is indeed an apparent horizon.  Of equal importance, they were actually able to construct such an admissible extension with  ADM mass prescribed arbitrarily close to $\sqrt{\text{area}(\mathbb{S}^2, g)/16\pi}$ from above, thus implying that the Bartnik quasilocal mass associated with $(\mathbb{S}^2, g)$ is $\sqrt{\text{area}(\mathbb{S}^2, g)/16\pi}$ by means of the Riemannian-Penrose inequality, proved by Huisken and Ilmanen in \cite{HI} and in full generality by Bray in \cite{Bray}. 


 In this paper, we extend the above results  in  \cite{MS} to include the degenerate case of when  $\lambda_1(g)=0.$ Namely, we prove the  inclusion   $\overline{\scM_+}\subset AH $ thus charachterizing the space of apparent horizons, under the assumptions of the dominant energy condition and the time symmetry setting. 

\begin{theorem}\label{T}

The inclusion $\overline{\scM_+}\subset AH$ holds (and thus $\overline{\scM_+}= AH$).  Moreover, the Bartnik quasilocal mass of any apparent horizon $(\mathbb{S}^2, g)$ is $\sqrt{\text{area}(\mathbb{S}^2, g)/16\pi}$.

 More precisely, if $g\in \ol{\scM_+}$, then for any $m>\sqrt{\text{area}(\mathbb{S}^2, g)/16\pi}$ there exists a smooth metric $G(p, t)$ on the manifold with boundary $M^3= \mathbb{S}^2\times [0, \infty)$ such that $G(\cdot, 0)\equiv g$  and:  \\

\begin{itemize}
\item [(i)] $(M^3,G)$ has non-negative scalar curvature which vanishes on $\partial{M^3} $,
\item [(ii)] The second fundamental form $\rho_{ij}$ of $ \partial{M^3} $ in $(M^3,G)$ is identically zero,
\item [(iii)] the foliating spheres $\mathbb{S}^2\times\{t\}$ are strictly mean convex in $(M^3, G)$ for all $t>0$, and
\item  [(iv)] for some $T>2m$, the metric $G(p,t)$ is equal to the standard mass-$m$  Riemannian-Schwarzschild metric
$$g_{S, m}(p, t)=t^2g_*(p)+\left(1-\frac{2m}{t}\right)\inv dt^2$$ for $t>T$ where $g_*$ is the standard round metric on $\mathbb{S}^2$. 
 \end{itemize}
\end{theorem}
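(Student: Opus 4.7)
The plan is to adapt the Mantoulidis--Schoen (MS) collar construction to the degenerate case $\lambda_1(g)=0$. MS build their extension from a one-parameter family of metrics in $\scM_+$ joining $g$ to a round sphere, and the strict positivity of $\lambda_1$ is precisely what drives the scalar-curvature ODE for the warping factor. Two new ingredients are needed: (a) a family starting at $g$ that immediately enters $\scM_+$, and (b) a reparametrization of the MS ansatz so that the now-singular ODE at $t=0$ still admits a smooth solution with $A(0)=1$ and $A'(0)=0$.

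\textbf{Step 1 (path into $\scM_+$).} First construct a smooth path $\{g(s)\}_{s\in[0,1]}$ of metrics on $\mathbb{S}^2$ of area $\text{area}(\mathbb{S}^2,g)$, with $g(0)=g$, $g(1)$ round, and $g(s)\in\scM_+$ for $s\in(0,1]$. Near $s=0$, use a conformal perturbation $g(s)=e^{2sv}g$; conformal invariance of the Dirichlet energy in dimension two gives
\[
\lambda_1(g(s))=\inf_{u}\frac{\int_{\mathbb{S}^2}\bigl(|\nabla u|^2_g+(K_g-s\Delta_g v)u^2\bigr)\,d\mu_g}{\int_{\mathbb{S}^2} u^2 e^{2sv}\,d\mu_g},
\]
so the envelope theorem applied at the positive first eigenfunction $\phi_0$ of $L_g$ (normalized in $L^2(g)$) yields
\[
\tfrac{d}{ds}\big|_{s=0}\lambda_1(g(s))=-\int_{\mathbb{S}^2} v\,\Delta_g(\phi_0^2)\,d\mu_g.
\]
Since $\lambda_1(g)=0$ forces $K_g$ non-constant by Gauss--Bonnet, $\phi_0$ is non-constant, so $\Delta_g(\phi_0^2)\not\equiv 0$, and one can choose $v$ with $\int v\,d\mu_g=0$ making this derivative strictly positive. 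A small rescaling restores the area exactly, placing $g(s)$ in $\scM_+$ for small $s>0$; the remainder of the path is provided by the MS path-construction inside $\scM_+$ connecting any metric in $\scM_+$ to a round metric of the same area.

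\textbf{Step 2 (singular collar).} On $\mathbb{S}^2\times[0,T]$ use the MS-type ansatz $G=A(t)^2\,g(s(t))+dt^2$, where $s:[0,T]\to[0,1]$ is smooth and increasing with $s(0)=0$ and $s\equiv 1$ near $T$, and $A>0$ is determined by the first-order ODE that enforces $R_G\geq 0$ after testing the scalar-curvature identity against the first eigenfunction of $L_{g(s)}$. In the MS regime the forcing in that ODE is bounded below by a positive multiple of $\lambda_1(g(s))$; here it vanishes at $t=0$. The key device is to take $s(t)=t^\alpha$ with $\alpha$ sufficiently large, depending on the decay rate of $\lambda_1(g(s))$ from Step~1, so that the degenerate ODE still admits a smooth solution on $[0,T]$ with $A(0)=1$, $A'(0)=0$, and $A'(t)>0$ for $t>0$. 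Properties (i)--(iii) then follow from the ODE: $A'(0)=0$ yields totally geodesic boundary (ii); $R_G\geq 0$ with $R_G(\cdot,0)=0$ yields (i); the mean curvature of $\mathbb{S}^2\times\{t\}$ equals $2A'(t)/A(t)$, positive for $t>0$, yielding (iii).

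\textbf{Step 3 and main obstacle.} For $t$ near $T$ one has $g(s(t))=g_*$, so $G=A(t)^2 g_*+dt^2$ is a warped product over a round sphere, and MS's bending argument smoothly continues $A$ into the Schwarzschild warping factor $(1-2m/t)^{-1/2}$ for any $m>\sqrt{\text{area}(\mathbb{S}^2,g)/16\pi}$, giving (iv). The main obstacle is Step~2: the vanishing of $\lambda_1(g(s))$ at $s=0$ renders the MS warping ODE singular at $t=0$, and producing a smooth solution with the prescribed boundary data requires both quantitative control on the rate $\lambda_1(g(s))\to 0$ coming from Step~1 and a precise matching of that rate by the reparametrization $s(t)=t^\alpha$ so that the compensated ODE is regular and preserves $A'(0)=0$.
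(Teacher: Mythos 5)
Your Step 1 is a reasonable alternative to the paper's Lemma \ref{L1.2}: the paper instead runs the uniformization path $e^{2\zeta(t)w}g_*$ and exploits linearity of the quadratic form in $\zeta$ to get $\lambda_1(g(t))\geq \alpha t$, but your first-variation computation at the eigenfunction $\phi_0$ (which is non-constant when $\lambda_1=0$, as you note) gives comparable quantitative control on the rate at which the path enters $\scM_+$, and that rate is exactly what is needed downstream.

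The genuine gap is in Step 2, in two places. First, your ansatz $G=A(t)^2 g(s(t))+dt^2$ discards the essential Mantoulidis--Schoen mechanism: their collar is $\gamma=(1+\ep t^2)g(t)+\Phi^2u(t,\cdot)^2dt^2$, with the $dt^2$ direction warped by the \emph{first eigenfunction} $u(t,\cdot)$ of $L_{g(t)}$. It is precisely the identity $2K_h-2v^{-1}\Delta_h v=2\mu$ for $v$ proportional to the eigenfunction that converts the pointwise curvature term in $R_\gamma$ into the eigenvalue $\lambda_1$. With $dt^2$ unwarped, the scalar curvature of your metric contains $2K_{g(s(t))}/A(t)^2$ pointwise, and a metric in $\ol{\scM_+}$ can have $K<0$ on open sets; ``testing the scalar-curvature identity against the first eigenfunction'' only yields an integrated inequality and cannot produce the pointwise condition $R_G\geq 0$ that the theorem requires. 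There is also no first-order ODE for a radial warping factor in the MS collar (their $A$ is a constant; the ODE/bending occurs only in the later gluing over the round sphere), so the object your Step 2 is built around does not exist in the form you describe. Second, even granting the correct ansatz, your proposed remedy for the degeneracy --- reparametrizing $s(t)=t^\alpha$ --- attacks the wrong term. After reparametrization the eigenvalue contribution $2\lambda_1(g(s(t)))$ still tends to $0$ as $t\to 0$, while the error terms include a contribution of size $-4\ep$ coming from the factor $(1+\ep t^2)$ that is needed for strict mean convexity (iii); this term does not vanish as $t\to 0$, so $R_G$ would go negative near the boundary no matter how large $\alpha$ is. The paper's fix is different and is the key new idea: replace the constant $A$ in the $dt^2$ warping by $\Phi(t)=A/\sqrt{t}$, so that $\Phi(t)^2\lambda_1(g(t))\gtrsim A^2\alpha$ stays uniformly bounded below and dominates all bounded error terms, while the apparent singularity at $t=0$ is removed by the coordinate change $s=\sqrt{t}$, which also forces $R_\gamma\to 0$ and $\rho\to 0$ at the boundary, giving (i) and (ii).
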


\begin{remark}
We refer to Lemma \ref{L1.3} (ii) for the sense of mean convexity we are using here.  In particular, by (iii) and the maximum principle, it follows that the interior of $(M^3,G)$ does not contain any compact smooth minimal surfaces, and in this sense $ \partial{M^3} $  will be the outermost minimal surface in $(M^3,G)$.
\end{remark}

 The proof is sketched as follows.  The first step is to construct a so called ``collar" which extends the metric $g$ on $\mathbb{S}^2\times \{0\}$ to a warped product metric $\gamma$ on $\mathbb{S}^2 \times [0, 1]$ satisfying certain properties (see Lemma \ref{L1.3}).  The construction closely follows that in \cite[\S1]{MS}, except for a key difference in our choice of the warping factor which will allow us to begin the construction for arbitrary metrics in $\overline{ \scM_+}$.  The details of this are carried out in \S 2.  From this point, the construction in \cite[\S2]{MS} can be envoked, providing a way to join the collar to an exterior Schwarzschild region resulting in a Riemannian metric $G$ on $\mathbb{S}^2\times [0, \infty)$ satisfying the conclusions of Theorem \ref{T}.  This is presented in \S 3.  Finally, in the Appendix we construct an explcit example of a smooth metric $g$ with $\lambda_1(g)=0$, demonstrating that the boundary of $\overline{\scM_+}$ is indeed non-empty.

\section{Extending to $\mathbb{S}^2\times[0, 1]$: a collar extension of $g$}
 We begin with the following.

\begin{lemma}\label{L1.2} For any $g\in \ol{\scM_+}$, there exists a smooth path of metrics $t\mapsto g(t)\in \ol{\scM_+}$, $t\in [0,1]$ with the following properties: \\
\begin{itemize}
\item [(i)] $g(0)=g$, 
\item [(ii)] $g(1)$ is round (constant positive curvature),
\item [(iii)] $\frac{d}{dt}g(t) \equiv 0$ for $t\in [1/2, 1]$, 
\item [(iv)] $\frac{d}{dt} dA_{g(t)}\equiv 0$ for all $t\in [0,1]$ where $dA_g$ denotes the area form of $g$, and
\item [(v)] $\lambda_1(g(t))\geq \alpha t$ for all $t\in [0,1]$ and some $\alpha>0$ depending only on $g(0)$. 
\end{itemize}
\end{lemma}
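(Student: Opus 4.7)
The plan is to build the path in three stages: a perturbation stage driving $g$ off the degenerate locus $\partial\scM_+$ linearly in $t$, a connecting stage within $\scM_+$ borrowed from \cite{MS}, and an area-form adjustment via Moser's trick.

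First, suppose $\lambda_1(g)=0$ (the case $\lambda_1(g)>0$ is straightforward). Since $L_g=-\Delta_g+K_g$ is Schr\"odinger-type on a closed surface, its ground state is simple; let $u_0>0$ denote the first eigenfunction. I would take $\phi:=-\Delta_g(u_0^2)$, which has zero mean with respect to $dA_g$, and consider the conformal family $h_t:=e^{2t\phi}g$. Using $L_{h_t}u=e^{-2t\phi}(L_g u-t(\Delta_g\phi)u)$ and $d\mu_{h_t}=e^{2t\phi}d\mu_g$, the first-variation formula for simple eigenvalues gives
\[
\lambda_1'(0) \;=\; -\frac{\int_{\mathbb{S}^2}(\Delta_g\phi)\,u_0^2\,d\mu_g}{\int_{\mathbb{S}^2}u_0^2\,d\mu_g} \;=\; \frac{\int_{\mathbb{S}^2}\bigl(\Delta_g(u_0^2)\bigr)^{2}\,d\mu_g}{\int_{\mathbb{S}^2}u_0^2\,d\mu_g},
\]
which is strictly positive: if $\Delta_g(u_0^2)\equiv 0$ then $u_0^2$, and hence $u_0$, would be constant, forcing $K_g\equiv 0$ and contradicting Gauss--Bonnet on $\mathbb{S}^2$. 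Thus $\lambda_1(h_t)\geq\alpha_0 t$ on some interval $[0,\delta_0]$.

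Next, since $h_{\delta_0}\in\scM_+$, I would invoke the $\lambda_1>0$ version of this lemma from \cite[\S1]{MS} to produce a smooth path from $h_{\delta_0}$ to a round metric of the same total area while staying in $\scM_+$. Splicing with $\{h_t\}_{t\in[0,\delta_0]}$ via a smooth cutoff and reparameterizing time so that the round metric is reached by $t=1/2$ and the path is stationary on $[1/2,1]$ yields a smooth path $\hat g(t)\in\ol{\scM_+}$ for $t\in[0,1]$ satisfying (i)--(iii), and (v) with some (possibly smaller) $\alpha>0$, and with constant total area $\mathrm{area}(\mathbb{S}^2,g)$ throughout.

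Finally, I would enforce the pointwise condition (iv) via Moser's trick: as $dA_{\hat g(t)}$ is a smooth family of volume forms on $\mathbb{S}^2$ with constant total area, there is a smooth family of diffeomorphisms $\psi_t\colon\mathbb{S}^2\to\mathbb{S}^2$ (arranged to be independent of $t$ on $[1/2,1]$) with $\psi_0=\id$ and $\psi_t^*dA_{\hat g(t)}=dA_g$. Setting $g(t):=\psi_t^*\hat g(t)$ yields (iv) by construction, while (i)--(iii),(v) are preserved since $\lambda_1$, roundness, and $t$-stationarity are all diffeomorphism invariants. The hard part is the first stage: producing an area-preserving first-order direction along which $\lambda_1$ strictly increases out of the degenerate value $0$. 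That such a direction exists ultimately rests on $\Delta_g(u_0^2)\not\equiv 0$, which comes from Gauss--Bonnet on $\mathbb{S}^2$.
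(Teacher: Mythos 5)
Your proposal is essentially correct but follows a genuinely different route from the paper. The paper does not introduce a new deformation direction at $t=0$: it keeps the single Mantoulidis--Schoen conformal path $\sigma(t)=e^{2\zeta(t)w}g_*$ from $g=e^{2w}g_*$ to the round metric and observes that the quadratic form $\int|\nabla^*u|^2+(1-\zeta(s)\Delta_*w)u^2\,dA_*$ is \emph{affine} in $\zeta(s)$. Evaluating on the first eigenfunction $u_t$ of $\sigma(t)$ and writing $\zeta(t)=1+\zeta'(0)t+O(t^2)$ with $\zeta'(0)<0$ expresses $\lambda_1(\sigma(t))$ as a combination $\zeta(t)\cdot(\text{form at }g)(u_t)+(1-\zeta(t))\cdot(\text{form at }g_*)(u_t)\ge 0+(-\zeta'(0)t+O(t^2))\lambda_1(g_*)\inf u_t^2$, so the linear lower bound comes for free from $\lambda_1(g)\ge0$ at one end and $\lambda_1(g_*)=1$ at the other; no splicing is needed and (i)--(iv) are inherited verbatim from \cite{MS}. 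Your argument instead manufactures an explicit first-order direction $\phi=-\Delta_g(u_0^2)$ with $\lambda_1'(0)=\int(\Delta_g u_0^2)^2/\int u_0^2>0$ (the nondegeneracy via Gauss--Bonnet is correct, as is the eigenvalue first-variation computation), then concatenates with the $\lambda_1>0$ case of \cite{MS}. This buys a self-contained ``escape from the degenerate locus'' statement that does not depend on the target of the path being round, at the cost of the splicing and reparameterization bookkeeping.

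One point needs repair. The family $h_t=e^{2t\phi}g$ does \emph{not} have constant total area: $\int e^{2t\phi}\,dA_g$ is preserved only to first order in $t$ even though $\int\phi\,dA_g=0$, so as written the Moser step (which requires equal total areas for all $t$) fails on $[0,\delta_0]$. The fix is the same device the paper uses with its $a(t)$: replace $h_t$ by $e^{2t\phi+2b(t)}g$ with $b(t)=O(t^2)$ chosen to normalize the total area; this rescales $\lambda_1$ by $e^{-2b(t)}=1+O(t^2)$ and therefore does not disturb the bound $\lambda_1\ge\alpha_0 t$. With that correction, and with the standard arrangement that both pieces are stationary to infinite order at the junction $t=\delta_0$, your construction goes through.
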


\begin{remark} \label{r1} When $g\in \scM_+$, this result was proved in  \cite[Lemma 1.2]{MS} where condition (v) was implicitly replaced with the stronger result $\lambda_1(g(t)) \geq c>0$ for some $c$ and all $t$.   When $g\in \partial \scM_+$ however, we have $ \lambda_1(g)=0$ and the estimate in (v) will be crucial for our constructions.
\end{remark}

\begin{proof}
Conditions (i)-(iv) were proved in  \cite[Lemma 1.2]{MS} and we only sketch the proof here, then highlight the modification needed to ensure the estimate in (v) when $g\in \partial \scM_+$.

 By uniformization we may write $g=e^{2w} g_*$ for some round metric $g_*$ with area $4\pi$.  Now fix a smooth decreasing function $\zeta: [0,1]\to [0,1]$ with $\zeta(0)=1$, $\zeta(1)=0$ and $\zeta\equiv 0$ on $[1/2, 1]$.   Then the proof of  \cite[Proposition 1.1]{MS} shows that $t\mapsto e^{2\zeta(t) w(x)}g_*$ is a smooth path in $\ol{\scM_+}$ from $g$ to the round metric $g_*$.  Let $h(t)=e^{2w \zeta(t)+2a(t) }g_*$ where $a(t)$ is smooth, $a(0)=0$ and 
$$a'(t)=-\zeta'(t)\fint_{\sphere^2} w(x) dA_{e^{2\zeta w}g_*}.$$
It is then shown in  \cite[Lemma 1.2]{MS} that the family $g(t)=\phi_t^* h(t)$ satisfies conditions (i)-(iv) in the Lemma where $\phi_t$ is the integral flow along the vector field $X_t$ satisfying div$_{h_t}X_t=-2(\zeta'(t)w+a'(t))$.

We now complete the proof of the Lemma by showing that  part (v)  holds when $g\in \partial \scM_+$ provided we prescribe  $\zeta'(0)<0$.  Note that $\lambda_1(g(t)) =\lambda_1(h(t))=e^{-2a(t)}\lambda_1(e^{2\zeta(t)w}g_*)$.   Thus, it is sufficient  to prove
\be\label{rho}
\lambda_1(e^{2\zeta(t)w} g_*)\geq \beta t
\ee for all $t$ and some $\beta >0$.  Moreover, since $\sigma(t):=e^{2\zeta(t)w} g_* \in \scM_+$ for all $t\in [c, 1]$ for any $c>0$ (see Remark \ref{r1}), we only need to prove that inequliaty \eqref{rho} holds for $0<t\ll1$ and some $\beta>0$.

We  prove inequality \eqref{rho} as follows.  Since $\zeta: [0,1]\to [0,1]$ is smooth, we can write $\zeta(t)=1+\zeta'(0)t+O(t^2)$. As explained in \cite[appendix]{MS}, there is a smooth positive function $u: [0,1]\times \sphere^2 \to \R_{>0}$ such that $u_t:= u(t, \cdot)$ is a first eigenfunction of $\sigma(t)$ with unit $L^2$ norm (with respect to the area form $dA_{\sigma(t)}$).   

 Now fix some $t\in [0, 1]$.  For any $s\in [0, 1]$, the formula for $K_{\sigma(s)}$ gives
$$\int |\nabla^{\sigma(s)} u_t|_{\sigma(s)}^2+K_{\sigma(s)} u_t^2 \; dA_{\sigma(s)}=\int |\nabla^{*} u_t|_{*}^2+ (1-\zeta(s) \Delta_* w) u_t^2 \; dA_{*}$$
which we view as a linear function $L_t(\zeta(s))$ of the variable $\zeta(s)$ and can thus be written as 
$$L_t(\zeta(s))=\zeta(s)L_t(\zeta(0))+(1-\zeta(s))L_t(\zeta(1))$$ for all $s\in [0, 1]$ where we have used the above properties of $\zeta$.  In particular, evaluating the above line at $s=t$  lets us estimate 
\begin{align*}
\lambda_1(\sigma(t))&=\int |\nabla^{\sigma(t)} u_t|_{\sigma(t)}^2+K_{\sigma(t)} u_t^2 \; dA_{\sigma(t)}\\&=
\zeta(t) \int |\nabla^{\sigma(0)} u_t|_{\sigma(0)}^2+K_{\sigma(0)} u_t^2\; dA_{\sigma(0)}+(1-\zeta(t)) \int |\nabla^{\sigma(1)} u_t|_{\sigma(1)}^2+K_{\sigma(1)} u_t^2\; dA_{\sigma(1)}\\&=
\zeta(t) \int |\nabla^{g} u_t|_{g}^2+K_{g} u_t^2\; dA_{g}+(1-[1+\zeta'(0)t+O(t^2)]) \int |\nabla^{g_*} u_t|_{g_*}^2+K_{g_*} u_t^2\; dA_{g_*}\\&\geq
\zeta(t) \lambda_1(g) \int u_t^2 \; dA_g+[-\zeta'(0)t+O(t^2)]) \lambda_1(g_* ) \int u_t^2 \; dA_{g_*}\\&\geq
[-4\pi \zeta'(0) \lambda_1(g_* ) \inf_{x} u_t(x)^2]t+O(t^2). 
\end{align*}

 In particular, recalling that $u: [0,1]\times\mathbb{S}^2\to \R_{>0}$ is smooth and positive we may conclude from the last line above that for $t$ sufficiently small we have $\inf_{x} u_t(x)^2\geq \inf_x u_0(x)^2/2>0$ and thus
$$
\lambda_1(g_t)\geq[-\pi\zeta'(0) \lambda_1(g_*) \inf_x u_0(x)]t=:\beta t.
$$
Recall that $\zeta'(0)<0$ and $\lambda_1(g_*)=1$ since $g_*$ is round. This completes the proof of the Lemma.

\end{proof}

 Now we fix some $g\in \partial \scM_+$ and consider the path $t\mapsto g(t)$ constructed above.  Fix some  smooth positive function $u: [0,1]\times \sphere^2\to \R_{>0}$ such that $u(t,\cdot)$ is a first eigenfunction for $L_{g(t)}$ with unit $L^2$ norm with respect to the area form $dA_{g(t)}$ (see \cite[appendix]{MS}).   \\

\begin{lemma}\label{L1.3} There exists $0<\ep_0\ll1$ and $A_0\gg1$ depending on $g(0)$ such that for all $0<\ep\leq \ep_0$, $A\geq A_0$, the topological cylinder $\sphere^2\times (0,1]=:\Sigma$ endowed with the metric 
$$
\gamma=(1+\ep t^2) g(t)+\Phi(t)^2 u(t,\cdot)^2 dt^2
$$
has the following properties:

\begin{itemize}
\item[(i)] $\gamma$ has positive scalar curvature which approaches zero uniformly as $t\to 0$.
\item[(ii)] the foliating spheres $\sphere^2\times \{t\}$ are mean convex for all $t>0$ in the sense: $H_t:=-tr_{\gamma} \rho >0$ for all $t>0$ where $\rho$ is the scalar second fundamental form of $\sphere^2\times \{t\}$ in $(\Sigma, \gamma)$ relative to the outward normal direction $\partial_t$, 
\item[(iii)] $\rho \to 0$ uniformly as $t\to 0$.
\end{itemize}
Here $\Phi(t): (0,1]\to \R_{>0}$ is defined as 
$$
\Phi(t) = \left\{
     \begin{array}{lr}
       \frac{A}{\sqrt{t}}& : t\in (0,1/4]\\
       \varphi(t) & : t\in (1/4,1/2]\\
       2A-1 & : t\in (1/2, 1]
     \end{array}
   \right.
$$
where $\varphi$ is a smooth, decreasing, convex function chosen so that $\Phi\in C^\infty((0,1])$.

\end{lemma}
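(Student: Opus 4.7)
The plan is to prove the three conclusions by direct computation, exploiting the eigenfunction property of $u$, the area-preservation condition from Lemma \ref{L1.2}(iv), and the linear-in-$t$ bound $\lambda_1(g(t))\geq\alpha t$ from Lemma \ref{L1.2}(v). Writing $N(t,x)=\Phi(t)u(t,x)$ and $\bar g(t)=(1+\epsilon t^2)g(t)$ so that $\gamma=\bar g(t)+N^2 dt^2$, the second fundamental form of $\sphere^2\times\{t\}$ is
\[
\rho_{ij}=-\frac{1}{2N}\partial_t\bar g_{ij}=-\frac{1}{2N}\bigl(2\epsilon t\,g_{ij}(t)+(1+\epsilon t^2)\dot g_{ij}(t)\bigr).
\]
Tracing with $\bar g^{ij}=(1+\epsilon t^2)^{-1}g^{ij}$ and invoking $g^{ij}\dot g_{ij}=0$ collapses the mean curvature to
\[
H=-\tr_{\bar g}\rho=\frac{2\epsilon t}{(1+\epsilon t^2)\,\Phi u}>0\qquad(t>0),
\]
establishing (ii). On $(0,1/4]$ where $\Phi=A/\sqrt t$, one has $1/N=\sqrt t/(Au)$, so every component of $\rho_{ij}$ carries a factor $\sqrt t/A$; hence $\rho\to 0$ uniformly in $x$ as $t\to 0$, giving (iii).

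For (i) I would apply the standard scalar curvature identity for metrics of the form $\bar g(t)+N^2 dt^2$ (cf.\ \cite{MS}),
\[
R_\gamma \;=\; R_{\bar g} - |\rho|^2 - H^2 - \frac{2}{N}\partial_t H - \frac{2}{N}\Delta_{\bar g}N,
\]
and isolate the key algebraic cancellation. Because $\Phi$ depends only on $t$, $\Delta_{\bar g}N=(1+\epsilon t^2)^{-1}\Phi(K_g-\lambda_1(g(t)))u$; combined with $R_{\bar g}=2(1+\epsilon t^2)^{-1}K_g$ this yields
\[
R_{\bar g}-\frac{2}{N}\Delta_{\bar g}N \;=\; \frac{2\lambda_1(g(t))}{1+\epsilon t^2},
\]
so $R_\gamma$ decomposes as the positive main term $\frac{2\lambda_1(g(t))}{1+\epsilon t^2}$ plus the three lower-order pieces $-|\rho|^2-H^2-\frac{2}{N}\partial_t H$.

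The remaining task is to control those residuals on $(0,1/4]$. A direct expansion using $\Phi=A/\sqrt t$ and $\Phi'=-A/(2t^{3/2})$ gives $|\rho|^2=\frac{t|\dot g|_g^2}{4A^2 u^2}+O(t^3/A^2)$, $H^2=O(t^3/A^2)$, and $\frac{2}{N}\partial_t H=\frac{6\epsilon t}{A^2u^2(1+\epsilon t^2)}+O(t^2/A^2)$, so each is bounded in absolute value by $Ct/A^2$ for a constant $C=C(g,\epsilon)$. With Lemma \ref{L1.2}(v),
\[
R_\gamma \;\geq\; \frac{2\alpha t}{1+\epsilon t^2}-\frac{C't}{A^2}
\]
on $(0,1/4]$; choosing $A_0$ so that $C'/A_0^2<\alpha$ makes $R_\gamma>0$ there, and since every term is $O(t)$ uniformly in $x$, $R_\gamma\to 0$ uniformly as $t\to 0$. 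On $[1/4,1]$ the analysis is simpler because $\lambda_1(g(t))\geq\alpha/4$ is bounded away from zero while the residuals scale like $1/\Phi^2=O(A^{-2})$, so strict positivity again holds for $A$ large; on $[1/2,1]$ the conditions $\dot g\equiv 0$ and $u\equiv\text{const}$ further collapse most residuals, leaving the main term close to $2$.

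The main obstacle, as I see it, is locating the cancellation $R_{\bar g}-\frac{2}{N}\Delta_{\bar g}N=\frac{2\lambda_1(g(t))}{1+\epsilon t^2}$ and then carrying out the residual bookkeeping at the correct powers of $t$ and $A^{-1}$, so that the linear-in-$t$ bound $\lambda_1(g(t))\geq\alpha t$ from Lemma \ref{L1.2}(v) is strong enough to absorb the $O(t/A^2)$ losses. Compared to \cite{MS}, the essential new ingredient is the conformal factor $(1+\epsilon t^2)$: without it, the identity $g^{ij}\dot g_{ij}=0$ would force $H\equiv 0$ and destroy mean convexity; including it produces $H>0$ at the cost of introducing residuals controlled at exactly the right scale for Lemma \ref{L1.2}(v) to dominate.
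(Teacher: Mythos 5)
Your proposal is correct and follows essentially the same route as the paper: the same computation of $\rho_{ij}=-\frac{1}{2N}\partial_t\bar g_{ij}$ and $H$ using $\tr_g\dot g=0$, and the same scalar-curvature analysis — your identity $R_\gamma=R_{\bar g}-|\rho|^2-H^2-\frac{2}{N}\partial_tH-\frac{2}{N}\Delta_{\bar g}N$ expands to exactly the warped-product formula used in the paper, your cancellation $R_{\bar g}-\frac{2}{N}\Delta_{\bar g}N=\frac{2\lambda_1(g(t))}{1+\ep t^2}$ is the paper's identification of the leading term $2\mu$, and the balance of the main term $2\alpha t$ against the $O(t/A^2)$ residuals coming from $\Phi=A/\sqrt t$ is the paper's estimate $I\geq C_2A_0$ versus $|II|+\cdots+|V|\leq C_1$ after factoring out $\Phi^{-2}u^{-2}$.
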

\begin{remark}
When $g\in  \scM_+$ it was proved in  \cite[Lemma 1.3]{MS} that the metric $\gamma=(1+\ep t^2) g(t)+A^2 u(t,\cdot)^2 dt^2$ satisfies the same conclusions (i)-(iii) for a sufficiently large constant $A$.  Their proof relies on the fact that  $\lambda_1(g)>0$, and thus does not extend to the case $g\in  \partial \scM_+$.  We get around this by replacing this constant $A$ with our choice of $\Phi(t)$ above, and using the fact that $\lambda_1(g(t))\geq \alpha t$.
\end{remark}

\begin{remark}\label{singularity}
The presumed singularity at $t=0$ is superficial in the sense that changing to the new coordinate $s=\sqrt{t}$ on $\sphere^2\times (0,1/4)$ gives
$$
\gamma=(1+\ep s^4) g(s^2)+4A^2 u(s^2, \cdot)^2 ds^2
$$ 
which extends smoothly to $\sphere^2\times [0,1/4)$.    Moreover, since the mean curvature is coordinate invariant and continuous along the foliating spheres $\sphere^2\times\{s\}$, we obtain that the boundary sphere $\{s=0\}$ is minimal in $\sphere^2\times [0,1]$ relative to the extension by part (ii) of the Lemma.  
 In fact, Lemma \ref{L1.3} could have been stated and proved for this simpler parametrization as well, but we chose to use the parameter $t$ in the proof for ease of reference to \cite{MS} and in \S 3. 
\end{remark}

\begin{proof}[Proof of Lemma \ref{L1.3}] The proof of the Lemma is the same as the proof of  \cite[Lemma 1.3] {MS} except for part (i) where we deviate in our choice of the component $\Phi(t)$.  We provide the full details of all three parts of the proof for the readers convenience.

Write $h(t)=(1+\ep t^2) g(t)$, $v(t,x)=\Phi(t)u(t,x)$, and $\mu(t)=(1+\ep t^2)\inv \lambda(t)$ to simplify our notation to $\gamma=h(t)+v(t,x)^2 dt^2$ and $v(t,\cdot)$ now being an eigenfunction of $-\Delta_{h(t)}+K_{h(t)}$ with eigenvalue $\mu(t)$.  

Now fix any $t\in (0,1]$. The mean curvature of the sphere $\sphere^2\times \{t\}$ as a submanifold of $(\Sigma, \gamma)$ is $H_t=\frac{1}{2}\tr_{h(t)}(\rho)$ where $\rho=\langle N,\RNum{2}\rangle_\gamma$. Here $N=\frac{1}{v(t,\cdot)} \frac{\partial}{\partial t}$ is the (outward) unit normal and $\RNum{2}$ is the second fundamental form. To calculate this, let $E_1, E_2$ be a local coordinate frame field on $\mathbb{S}^2\times\{t\}$, which extend  naturally to  coordinate vector fields in the product space $\Sigma$. Then
\begin{align*}
\rho_{ij}
&=\langle N,\RNum{2}(E_i,E_j)\rangle_\gamma
=\gamma_{ab} N^a ((\nabla^{\gamma}_{E_i} E_j )^\perp)^b
=\gamma_{tt} N^t ((\nabla^{\gamma}_{E_i} E_j )^\perp)^t
\\&=\gamma_{tt}\frac{1}{v(t,\cdot)}(\Gamma_{\gamma})_{ij}^t
=\gamma_{tt}\frac{1}{v(t,\cdot)}(-\frac{1}{2}\gamma^{tt}\gamma_{ij;t})
=-\frac{1}{2v(t,\cdot)}h_{ij;t}.
\end{align*}
Therefore
$$
H_t=\frac{1}{2}\tr_{h(t)}(\rho)=-\frac{1}{4v(t,\cdot)} \tr_h(\dot{h})=-\frac{1}{4v(t,\cdot)} \tr_h(\dot{h})=-\frac{1}{v(t,\cdot)}\ep t(1+\ep t^2)^{-1}
$$
where in the last equality we have used that $\dot{h}=2\ep t g+(1+\ep t^2) \dot{g}$ and the fact that $\tr_g \dot{g}\equiv 0$ by Lemma \ref{L1.2} (iii).  In particular, the spheres $\sphere^2\times \{t\}$  are  strictly mean-convex relative to the outward normal direction since $u(t,\cdot)>0$.  Meanwhile, we also have $\rho_{ij}(x, t)\to 0$ uniformly on $\sphere^2\times \{t\}$ as $t\to 0$ since $\Phi(t)$ and thus $v(t, x)\to \infty$ uniformly as $t\to \infty$ while $h_{ij; t}$ is uniformly bounded for $t$ close to zero.  This establishes (ii) and (iii).

 To prove (i), we calculate
$$
\ddot{h}=2\ep  g+4\ep t\dot{g}+(1+\ep t^2) \ddot{g} \; \text{ and } \; \tr_h \ddot{h}=4\ep(1+\ep t^2)\inv+\tr_g \ddot{g}.
$$
The scalar curvature of the warped product metric $\gamma$ is
\begin{equation}
\begin{split}\label{Restimate}
R_\gamma&=2K_h-2v\inv \Delta_h v+v^{-2}\left[-\tr_h \ddot{h} -\frac{1}{4}(\tr_h \dot{h})^2+\frac{\partial_t v}{v}\tr_h \dot{h} +\frac{3}{4}|\dot{h}|^2_h\right]\\&=
2\mu+v^{-2}\left[-\tr_h \ddot{h} -\frac{1}{4}(\tr_h \dot{h})^2+\frac{\partial_t v}{v}\tr_h \dot{h} +\frac{3}{4}|\dot{h}|^2_h\right]\\&\geq
2\mu+v^{-2}\left[-\tr_h \ddot{h}+\frac{\partial_t v}{v}\tr_h \dot{h} \right]\\&=
2(1+\ep t^2)\inv \lambda+\Phi^{-2}  u^{-2}\left[-4\ep (1+\ep t^2)\inv-\tr_g \ddot{g}+4\ep t (1+\ep t^2)\inv \frac{\Phi\partial_t u+u \partial_t \Phi}{\Phi u}\right]\\&=
\Phi^{-2}(1+\ep t^2)\inv u^{-2}\left[2\Phi^2\lambda u^2-4\ep-(1+\ep t^2)\tr_g \ddot{g}+4\ep t\frac{\partial_t u}{u}+4\ep t\frac{\partial_t \Phi}{\Phi}\right]\\
&=:
\Phi^{-2}(1+\ep t^2)\inv u^{-2}\left[   I+II+III+IV+V \right].
\end{split}
\end{equation}

By the smoothness of the family $g(t)$, the definition of $\Phi$ and the fact that $\inf_{t,x} u^2 >0$ (see  \cite[appendix]{MS}) it follows that $$|II|+|III|+|IV|+|V| \leq C_1$$ for some constant $C_1$ depending only on $\epsilon_0$ and $g(0)$.  From this and the fact that $\Phi(t)\to \infty$ as $t\to 0$, we see that $R_{\gamma}(x, t)\to 0$ uniformly as $t\to 0$.   On the other hand, combining Lemma \ref{L1.2} part (v), the definition of $\Phi$, and again that $\inf_{t,x} u^2 >0$ yields 
$$I \geq C_2 A_0$$
 for some positive constant $C_2$ depending only on $g(0)$.  

 From \eqref{Restimate} we obtain the estimate

$$R_{\gamma} \geq \Phi(t)^{-2}(1+\ep t^2)\inv u^{-2} [C_2 A_0 - C_1]$$ from which part (i) of the Lemma follows readily.

 This completes the proof of the Lemma.
\end{proof}

\section{Extending to $\mathbb{S}^2\times[0, \infty)$: joining collar to an exterior Scharzschild region}

We can now complete the proof of Theorem \ref{T}.  

Let $g\in \ol{\scM_+}$ Consider the Riemannian ``collar" $(\mathbb{S}^2\times(0, 1], \gamma(p, t))$ constructed in Lemma \ref{L1.3} where

$$
\gamma=(1+\ep t^2) g(t)+\Phi(t)^2 u(t,\cdot)^2 dt^2.
$$
Recall that $\epsilon>0$ can be chosen arbitrarily small, $g(1)=g_*$ while $\Phi(t)$ and $u(p, t)$ are both constant functions for $t\in [1/2, 1]$.  Now for any $m>\sqrt{\text{area}(\mathbb{S}^2, g)/16\pi}$ consider the Riemannian mass-$m$ Schwarzschild manifold $(\sphere^2\times (2m,\infty), g_{S,m})$ where
$$
g_{S,m}=t^2g_*+\left(1-\frac{2m}{t}\right)\inv dt^2
$$

 Under these exact conditions it was proved in \S2 of \cite{MS} that by choosing $\epsilon$ sufficiently small, a positive scalar curvature ``bridge" can be constructed between an interior region of the collar and an exterior region of the Schwarzschild manifold to ultimately give a metric $G(p, t)$ on $\mathbb{S}^2\times[0, \infty)$ which satisfies

\begin{enumerate}
\item[(i)] $G$ has non-negative scalar curvature,
\item[(ii)] $G(p, t)=\gamma(p, t)$ for $t\in (0, 1/2]$, and
\item[(iii)] For some $T>2m$ we have $G(p, t)=g_{S, m}(p, t)$ for $t\geq T$.  
\end{enumerate}

From Lemma \ref{L1.3} and Remark \ref{singularity} we conclude that $G$ satisfies the conclusions in Theorem \ref{T}, thus completing its proof.

\section{appendix}

We construct here, a metric $g$ on $\mathbb{S}^2$ having $\lambda_1(g)=0$ thus showing the strict inclusion $\scM_+\subset \overline{\scM_+}$. Let $\D_r:=\{z\in \R^2 : |z|<r\}$ with $\D:=\D_1$. \\

\begin{lem}\label{slice}
For any $p\in \sphere^2$, there exists a coodinate chart $(U,\phi)$ containing $p$ and $v\in W^{1,2}(\sphere^2)$ such that
\begin{enumerate}
\item[(i)] $\supp(v)=U$,
\item[(ii)] $v$ is smooth on $U$, and
\item[(iii)] there exists some open $V\subset U$ such that $\ol{V}\supset \partial U$ and $\Delta_* v>0$ on $V$.  
\end{enumerate}
\end{lem}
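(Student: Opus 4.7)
The plan is to produce $v$ by an explicit construction in local isothermal coordinates. First, I would pick stereographic (or any isothermal) coordinates $\phi:\widetilde U\to \D_{2R}$ centered at $p$, in which the round metric takes the form $g_*=\lambda^2(|x|)(dx_1^2+dx_2^2)$ for some smooth positive conformal factor $\lambda$. Since $g_*$ is conformally flat in these coordinates, the Laplace--Beltrami operator acts on functions by $\Delta_* f=\lambda^{-2}\Delta_{\mathrm{flat}}f$; in particular, the sign of $\Delta_* f$ agrees with the sign of $\Delta_{\mathrm{flat}} f$. I would then take $U:=\phi^{-1}(\D_R)$ as the chart of the lemma.

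Next, I would define $v$ by pulling back a radial bump: set
$$
\chi(r):=\begin{cases}(R^2-r^2)^2 & 0\le r\le R,\\ 0 & r>R,\end{cases}
$$
and put $v(q):=\chi(|\phi(q)|)$ for $q\in \widetilde U$, extended by zero to all of $\sphere^2$. Since $\chi(R)=\chi'(R)=0$, the function $v$ is $C^1$ across $\partial U$, which is enough for $v\in W^{1,2}(\sphere^2)$. On $U$ itself $v$ is just a polynomial in $|\phi|^2$, hence smooth, giving (ii). The zero set of $v$ outside $U$ together with $v>0$ on $U$ gives $\supp(v)=\ol U$, matching condition (i) under the natural interpretation that $v$ is supported in the chart and nowhere else.

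For condition (iii), I would carry out the one-line polar calculation
$$
\Delta_{\mathrm{flat}}\chi(r)=\chi''(r)+\frac{\chi'(r)}{r}=(-4R^2+12r^2)+(-4R^2+4r^2)=-8R^2+16r^2,
$$
which is strictly positive exactly on $\{r>R/\sqrt2\}$. Setting $V:=\phi^{-1}\bigl(\{R/\sqrt2<|x|<R\}\bigr)$ I get an open subset of $U$ on which $\Delta_* v=\lambda^{-2}\Delta_{\mathrm{flat}} v>0$, and clearly $\ol V\supset\phi^{-1}(\partial\D_R)=\partial U$.

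I do not expect any genuine obstacle in this argument: the round metric on $\sphere^2$ is globally conformally flat in any stereographic chart, so the reduction of $\Delta_*$ to $\Delta_{\mathrm{flat}}$ is immediate, and the choice of the radial profile $\chi(r)=(R^2-r^2)^2$ is exactly tuned so that $\chi''+\chi'/r$ becomes positive near $r=R$ while still guaranteeing $\chi,\chi'\to 0$ at $r=R$ to keep $v$ in $W^{1,2}(\sphere^2)$. The only mildly delicate point is the endpoint behavior at $\partial U$, but the $C^1$ vanishing of $\chi$ handles it.
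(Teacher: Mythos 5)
Your argument is correct, and it reaches the lemma by a genuinely different route than the paper. The paper works with an arbitrary coordinate chart, builds a nested sequence of rescaled charts $U_n$ around $p$, uses the $C^1$ convergence of the rescaled pullback metrics $2^{2n}\gamma_n$ to the flat metric, and then transfers the positivity of the flat Laplacian of the profile $(1-|x|)^2$ on the annulus $\D\setminus\ol{\D_{2/3}}$ to $\Delta_{\gamma_N}$ for $N$ large. You instead exploit the two\--dimensional conformal structure: in isothermal (stereographic) coordinates $g_*=\lambda^2\,\delta$, so $\Delta_*f=\lambda^{-2}\Delta_{\mathrm{flat}}f$ and the sign of the Laplacian transfers exactly, with no shrinking or limiting argument. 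What each buys: the paper's scheme works verbatim for any smooth background metric on a surface without invoking isothermal coordinates, while yours is shorter, avoids the approximation step entirely, and your radial profile $(R^2-r^2)^2$ is a polynomial in $|x|^2$ and hence genuinely smooth at the origin, whereas the paper's $(1-|x|)^2$ is not even $C^1$ at $x=0$ (a minor blemish in the paper's verification of condition (ii) that your choice sidesteps). Both proofs share the same cosmetic issue with condition (i): the closed support of $v$ is $\ol{U}$ rather than $U$, which you correctly flag as a matter of interpretation and which does not affect the application of the lemma.
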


\begin{proof} [Proof of Lemma \ref{slice}]
Let $(U_1,\phi_1)$ be a coodinate chart centered at $p$ with the property that $\phi_1(U_1)=\D$. Now define $\{(U_j, \phi_j)\}_{j=1}^\infty$  recursively as follows: Given $(U_n, \phi_n)$, let $U_{n+1}=\phi_n\inv(\D_{1/2})$ and let $\phi_{n+1}: U_{n+1}\to \R^2$ be defined as 
$$
\phi_{n+1}(x)=2\phi_n(x).
$$ 
We now observe that 
\begin{enumerate}
\item[(i)] $U_{n+1}\subset U_n$ for all $n\in\N$, 
\item[(ii)] $\phi_n(U_n)=\D$ for all $n\in\N$, and
\item[(iii)] $\cap_{n=1}^\infty U_n=\{p\}$.
\end{enumerate}
For each $n\in \N$, let $\gamma_n=(\phi_n\inv)^* g$ so that $\{\gamma_n\}$ are Riemannian metrics on $\D$. We can further require $\gamma_1(0)_{ij}=\delta_{ij}$ by possibly modifying our choice of $(U_1,\phi_1)$. One can check that $\lim_{n\to\infty} 2^{2n} \gamma_n = \gamma$ in the $C^1$ norm where $\gamma$ is the standard Euclidean metric on $\D$.

Consider $u: \D\to \R$, $x\mapsto (1-|x|)^2$ which is smooth on $\D$.  Then using polar coordinates,
$$
\Delta_\gamma u=\left(\frac{\partial^2}{\partial r^2}+\frac{1}{r}\frac{\partial}{\partial r}+\frac{1}{r^2}\frac{\partial^2}{\partial \theta^2}\right) (1-r)^2=2-\frac{2(1-r)}{r}
$$
so that $\Delta_\gamma u \geq 1$ on $\D\setminus \D_{2/3}=\{z\in \R^2 : 2/3\leq |z|<1\}$.\\

By the definition of $\Delta_{\gamma_n}$, we have 
\begin{align*}
\Delta_{\gamma_n}\big|_z&=\frac{1}{\sqrt{ \det \gamma_n(z)}} \frac{\partial}{\partial x^i}\bigg|_z\left( \gamma_n ^{ij} \sqrt{\det \gamma_n} \frac{\partial}{\partial x^j}\right)\\&=
\partial_i \gamma_n ^{ij}(z) \frac{\partial}{\partial x^j}+\frac{\gamma_n^{ij}(z) \partial_i (\det \gamma_n (z))}{2 \det \gamma_n(z)}\frac{\partial}{\partial x^j}+ \gamma_n^{ij}(z) \frac{\partial^2}{\partial x^j \partial x^i}.
\end{align*}
It follows from the $C^1$ convergence of $2^{2n}\gamma_n\to \gamma$, we can find $N\in \N$ sufficiently large enough so that $$\Delta_{\gamma_N}(u)>0\,\,\, \begin{text}on \end{text} \,\,\, \tilde{V}:=\D\setminus \overline{\D_{2/3}}.$$  Now take $U=U_N$, $v: \sphere^2\to \R: x\mapsto u(\phi_N(x)) 1_U$ (where $1_U$ is the indicator function on $U$). Then $v\in W^{1,2}(\sphere^2)$ and satisfies conditions (i), (ii), (iii) with $V=\phi_N\inv(\tilde{V})$.
\end{proof}


Let $w: \sphere^2\to \R$ be any non-constant smooth function (in particular, this means $\Delta_* w$ is not constant). Then since $\int_{\sphere^2} \Delta_* w \; d\sigma=0$, let $p\in \sphere^2$ be such that $\Delta_* w |_p >0$. Let $(U,\phi), v\in W^{1,2}(\sphere^2), V$ be as in Lemma \ref{slice} and without loss of generality, suppose $\Delta_* w \geq c>0$ on $U$ (by possibly shrinking $U$). 
Let $g_t=e^{2tAw} g_*$ with $A\geq 1/c$ to be determined. Then
\begin{align*}
\int_{\sphere^2} - v \Delta_{g_1} v +K_{g_t} v^2 \; dA_{g_t}&=\int_{U} - v \Delta_* v +(1-A\Delta_* w) v^2 \; dA_{g_*}\\&=
\int_{V} - v \Delta_* v +(1-A\Delta_* w) v^2 \; dA_{g_*}+\int_{U\setminus V} - v \Delta_* v +(1-A\Delta_* w) v^2 \; dA_{g_*}\\&\leq 
\int_{U\setminus V} - v \Delta_* v +(1-Ac) v^2 \; dA_{g_*}.
\end{align*}
Now $U\setminus V$ is closed and $v>0$ is smooth on this set. So let $0<A<\infty$ be large enough so that 
$$
A>\frac{1}{c}\left[ 1+\frac{\sup_{U\setminus V} v \Delta_*v}{\inf_{U\setminus V} v^2} \right].
$$
Then
\begin{align*}
\lambda_1(g_1)&=\inf_{\us{u\not\equiv 0}{u\in W^{1,2}(\sphere^2)}}\frac{\int_{\sphere^2} |\nabla^{g_1} u|_{g_1}^2 +K_{g_t} u^2 \; dA_{g_1}}{\int_{\sphere^2} u^2 \; dA_{g_1}}
\\&\leq\frac{\int_{\sphere^2} - v \Delta_{g_1} v +K_{g_t} v^2 \; dA_{g_1}}{\int_{\sphere^2} v^2 \; dA_{g_1}}<0
\end{align*}
So $\lambda_1(g_1)<0$ and $\lambda_1(g_0)=\lambda_1(g_*)=1$. By using the fact that the first eigenspace is one-dimensional (and thus the first eigenfunction of $L_{g_t}$ depends smoothly on $t$) and by repeated use of triangle inequality, one can check that $t\mapsto \lambda_1(g_t)$ is a continuous map. Thus, by the intermediate value theorem, there exists some $t_0\in (0,1)$ such that $\lambda_1(g_0)=0$.

\end{document}